\begin{document}
\numberwithin{equation}{section}

\def\1#1{\overline{#1}}
\def\2#1{\widetilde{#1}}
\def\3#1{\widehat{#1}}
\def\4#1{\mathbb{#1}}
\def\5#1{\frak{#1}}
\def\6#1{{\mathcal{#1}}}
\def\8#1{\MakeLowercase{#1}}
\def\9#1{\MakeUppercase{#1}}

\def\C{{\4C}}
\def\R{{\4R}}
\def\N{{\4N}}
\def\Z{{\4Z}}

\title[Jet vanishing orders]
{
	Jet vanishing orders and effectivity 
	of Kohn's algorithm in dimension $3$\\
	\medskip
	\sl
	\8{
		\9Dedicated to \9Professor \9Ngaiming \9Mok
		on the occasion of his $60$th birthday
	}
}


\author[S.-Y. Kim \& D. Zaitsev]{Sung-Yeon Kim* and Dmitri Zaitsev**}
\address{S.-Y. Kim: Center for Mathematical Challenges, Korea Institute for Advanced Study, 85 Hoegiro, Dongdaemun-gu,
Seoul, Korea }
\email{sykim8787@kias.re.kr}
\address{D. Zaitsev: School of Mathematics, Trinity College Dublin, Dublin 2, Ireland}
\email{zaitsev@maths.tcd.ie}
\subjclass{32T25, 32T27, 	32W05, 32S05, 32B10, 32V15, 32V35, 32V40}
\thanks{*This research was supported by Basic Science Research Program through the National Research Foundation of Korea (NRF) funded by the Ministry of  Science, ICT and Future Planning (grant number NRF-2015R1A2A2A11001367)}
\keywords{Finite type, Multiplier ideals, Jacobian, subelliptic estimates, jets,
germs of holomorphic functions, order of contact,
$\bar\partial$-Neumann problem}

\def\Label#1{\label{#1}{\bf (#1)}~}
\def\Label#1{\label{#1}}


\def\cn{{\C^n}}
\def\cnn{{\C^{n'}}}
\def\ocn{\2{\C^n}}
\def\ocnn{\2{\C^{n'}}}


\def\dist{{\rm dist}}
\def\const{{\rm const}}
\def\rk{{\rm rank\,}}
\def\id{{\sf id}}
\def\aut{{\sf aut}}
\def\Aut{{\sf Aut}}
\def\CR{{\rm CR}}
\def\GL{{\sf GL}}
\def\Re{{\sf Re}\,}
\def\Im{{\sf Im}\,}
\def\span{\text{\rm span}}

\def\codim{{\rm codim}}
\def\crd{\dim_{{\rm CR}}}
\def\crc{{\rm codim_{CR}}}

\def\phi{\varphi}
\def\eps{\varepsilon}
\def\d{\partial}
\def\a{\alpha}
\def\b{\beta}
\def\g{\gamma}
\def\G{\Gamma}
\def\D{\Delta}
\def\Om{\Omega}
\def\k{\kappa}
\def\l{\lambda}
\def\La{\Lambda}
\def\z{{\bar z}}
\def\w{{\bar w}}
\def\Z{{\1Z}}
\def\t{\tau}
\def\th{\theta}

\emergencystretch15pt
\frenchspacing

\newtheorem{Thm}{Theorem}[section]
\newtheorem{Cor}[Thm]{Corollary}
\newtheorem{Pro}[Thm]{Proposition}
\newtheorem{Lem}[Thm]{Lemma}

\theoremstyle{definition}\newtheorem{Def}[Thm]{Definition}

\theoremstyle{remark}
\newtheorem{Rem}[Thm]{Remark}
\newtheorem{Exa}[Thm]{Example}
\newtheorem{Exs}[Thm]{Examples}

\def\bl{\begin{Lem}}
\def\el{\end{Lem}}
\def\bp{\begin{Pro}}
\def\ep{\end{Pro}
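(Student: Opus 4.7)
The plan is to exploit Kohn's algorithm in dimension $3$ together with a careful bookkeeping of how the vanishing orders of the successive (pre-)multipliers and their Jacobian determinants evolve at each step of the algorithm. Because the boundary is five-dimensional and the CR dimension equals $2$, at every stage there are only two $(1,0)$ vector fields spanning the complex tangential directions, so the relevant Jacobian determinants are $2\times 2$ minors; this keeps the combinatorics manageable and is precisely where the dimensional hypothesis should be used.

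I would begin by setting up the algorithm starting from a smooth defining function $r$ at a point $p\in M$ of finite D'Angelo type, together with the Levi determinant as the first nontrivial multiplier. Then I would introduce a notion of \emph{jet vanishing order} adapted to the CR structure---essentially the smallest weight, in a prescribed weighting of tangential coordinates, of a nonzero term in the Taylor expansion of the restriction of a function to $M$---and prove a Leibniz-type inequality showing that this order decreases in a controlled way under the operations that build each new multiplier: tangential differentiation, bracketing against $\bar\partial r$, and formation of determinants of allowable gradients. The crucial quantitative input is that the initial vanishing order of the Levi determinant is bounded above by the D'Angelo type minus $2$, and the induction hypothesis on the multiplier produced at step $k$ gives a similar bound.

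The core step is then a combinatorial iteration: feeding the bound at step $k$ into the inequality governing the determinantal step should produce a strictly smaller order at step $k+1$, with an explicit (polynomial) decrement. Iterating effectively many times forces the vanishing order down to zero, which means the restriction to $M$ of the multiplier becomes a unit and the algorithm terminates with the full ideal. Converting the number of iterations into a subelliptic gain via the Kohn--Catlin correspondence should then yield the effective estimate advertised in the title.

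The step I expect to be the main obstacle is ruling out ``hidden'' cancellations in the determinantal step: \emph{a priori} the Jacobian of a family of multipliers could vanish to strictly higher order than the naive Leibniz bound predicts, and one must show that any such anomalous vanishing can be absorbed by forming a different allowable combination that recovers the expected order. This is where the specific geometry of real hypersurfaces in $\C^3$ really has to be exploited---namely, that the two $(1,0)$ vector fields, their conjugates, and the real normal span the full tangent space once a single Levi bracket is added---and I anticipate that the bulk of the technical work will go into proving a clean ``nondegeneracy alternative'' of this form, stated in terms of jets, so that the inductive decrement remains valid at every step.
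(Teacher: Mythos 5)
Your proposal and the paper share a family resemblance in vocabulary (jet vanishing orders, controlling Jacobian determinants, effective iteration counts), but the actual mathematical setup and the key steps differ in ways that expose real gaps.

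First, a scope mismatch: you set up the problem for a general smooth defining function $r$, using the Levi determinant, tangential $(1,0)$ vector fields, brackets against $\bar\partial r$, and a weighting of tangential coordinates. The paper works entirely inside the holomorphic category, for \emph{special domains} $\Re z_{n+1}+\sum|F_j(z_1,\ldots,z_n)|^2<0$; all objects are germs of holomorphic functions in $\C^2$, Jacobian determinants are $\partial(f,g)/\partial(z_1,z_2)$, and the invariant $\t^k_V(S)$ measures vanishing of the $k$-jet of $f$ along holomorphic curves into a germ of an analytic variety $V$. Nothing in the argument touches vector fields, Levi brackets, or restriction to $M$. Your ``jet vanishing order'' is a weighted order in tangential coordinates; theirs is $\nu^k_\g(f)=\min_{|\a|\le k}\nu_\g(\d^\a f)$ along a curve $\g$. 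These are not the same invariant and are not interchangeable in the bookkeeping you describe.

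Second, and more importantly, you claim that iterating the Leibniz-type decrement forces the vanishing order to zero, so the multiplier becomes a unit and the algorithm terminates ``with the full ideal.'' That cannot be right as stated: what iterating Jacobian determinants can produce is a multiplier whose contact order along each zero curve of a previously chosen multiplier is bounded, not a unit. The termination in the paper has a genuinely different shape: one first produces \emph{two} multipliers $(F,\2F)$ forming an effective regular sequence with $\nu(F)=d$ and $\t_{\{F=0\}}(\2F)=t$ effectively bounded, then (Lemma~\ref{powers} and Corollary~\ref{K}) uses the multiplicity bound $D(I)\le dt$ to conclude $z_1^{dt},z_2^{dt}\in I$, takes the radical once to get $z_1,z_2$, and only then takes the final Jacobian to get $1$. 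Your sketch omits the radical step entirely, yet that step---postponed to the very end and done from an ideal of finite colength---is the mechanism that regains effectivity and is advertised as a main feature. Without it, the iteration you describe does not terminate.

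Third, the obstacle you flag (hidden cancellations in the determinant) is indeed the technical crux, and you are right to worry about it, but your proposed resolution via a vector-field/Levi-bracket ``nondegeneracy alternative'' is not what works here. The paper's Lemma~\ref{main-tech} resolves it by imposing a purely numerical comparison hypothesis $\nu_\g^{k-1}(F)>\nu^k_\g(F)+\nu_\g(\phi)-1$ on consecutive jet vanishing orders, after a normalization $\nu(\a)>\nu(\b)$ of the curve $\g$. Under that hypothesis one shows the minimum derivative order is attained by the transversal derivative $\partial_z^kF$ (Lemma~\ref{transversal}), isolates the ``good'' term $\partial_z^{a+1}\partial_w^bF\cdot\partial_w\phi-\partial_z^a\partial_w^{b+1}F\cdot\partial_z\phi$ by rewriting it via $(\partial_z^a\partial_w^bF(\g))'$ and $(\phi(\g))'$, and checks all competing terms are of strictly higher order. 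There is no nondegeneracy assumption on a frame; cancellation is controlled by the arithmetic of jet orders, and the paper gives explicit examples showing the hypothesis is sharp. If you want to push your plan through, you would need to formulate and prove a quantitative noncancellation statement of exactly this type in your setting, and then supply the missing termination mechanism (an effective regular sequence plus a single radical of an ideal of bounded colength) rather than hoping the order drops to zero on its own.
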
}
\def\bt{\begin{Thm}}
\def\et{\end{Thm}}
\def\bc{\begin{Cor}}
\def\ec{\end{Cor}}
\def\bd{\begin{Def}}
\def\ed{\end{Def}}
\def\br{\begin{Rem}}
\def\er{\end{Rem}}
\def\be{\begin{Exa}}
\def\ee{\end{Exa}}
\def\bpf{\begin{proof}}
\def\epf{\end{proof}}
\def\ben{\begin{enumerate}}
\def\een{\end{enumerate}}
\def\beq{\begin{equation}}
\def\eeq{\end{equation}}

\begin{abstract}
We propose a new class of geometric invariants called {\em jet vanishing orders},
and use them to establish a new selection algorithm
in the Kohn's construction of subelliptic multipliers for special domains in dimension $3$,
inspired by the work of  Y.-T. Siu~\cite{S10}.
In particular, we obtain effective termination of our selection algorithm
with explicit bounds both for the steps of the algorithm
and the order of subellipticity in the corresponding subelliptic estimates.
Our procedure possesses additional features of certain stability
under high order perturbations, due to deferring the step of taking 
radicals to the very end, see Remark~\ref{feat} for more details.

We further illustrate by examples the sharpness in our technical results
(in Section~\ref{4})
and demonstrate the complete procedure
for arbitrary high order perturbations 
of the Catlin-D'Angelo example \cite{CD10}  in Section~\ref{cd-ex}.

Our techniques here may be of broader interest
for more general PDE systems,
in the light of the recent program initiated
by the breakthrough paper
of Y.-T. Siu~\cite{S17}.

\end{abstract}

\maketitle

\tableofcontents

\section{Introduction}
In his seminal paper \cite{K79},
J.J.~Kohn invented a purely algebraic construction
of ideals of subelliptic multipliers for the $\bar\d$-Neumann problem.
The goal of this note is to propose a new class of geometric invariants,
called {\em jet vanishing orders},
that permits us to obtain a fine-grained control of the
effectiveness in the Kohn's construction procedure of subelliptic multipliers
for the so-called {\em special domains} of finite D'Angelo type
\cite{D79, D82} in $\C^3$.

Since this paper is dedicated to Professor Ngaiming Mok,
we would like to mention a striking parallel 
between the Kohn's {\em subelliptic multipliers ideals} and
the {\em varieties of minimal rational tangents} 
(VMRT) pioneered by N.~Mok and J.-M.~Hwang
\cite{HM98}.
Both theories connect {\em global} PDE or algebraic-geometric structures
with {\em local} differential-geometric objects
that can be treated by local geometric and analytic methods,
subsequently leading to important consequences
of global nature.
For more details on the VMRT,
see the articles and lecture notes
by Hwang and Mok
\cite{Mo08, H14}.
Another important parallel is historical.
The VMRT theory was developed 
to study deformation rigidity problems
originated from the celebrated
Kodaira-Spencer's work
on deformation of complex structures,
with
an important ingredient
coming from
D.~Spencer's program
to generalize 
the Hodge theory of
 harmonic integrals.
And it was the same program
that led Spencer to formulate the
$\bar\d$-Neumann problem,
for which Kohn invented his multiplier ideals approach
to tackle the problem of local regularity
(see \cite{K-etal04} for a more detailed account).

Note that Kohn's original procedure in \cite{K79}
gives no effective bound on the order of subellipticity $\eps$
in subelliptic estimates,
as illustrated by 
examples 
of G.~Heier~\cite{He08} and
 D.W.~Catlin and J.P.~D'Angelo \cite{CD10},
see \S\ref{eff} and \S\ref{cd-ex} below.
On the other hand, 
Y.-T. Siu \cite{S10, S17} obtained a new effective procedure
for special domains
and outlined an extension
of the special domain approach 
to general real-analytic and smooth cases.
Also note that for the special domains
of the so-called {\em triangular form} in $\C^n$,
a different effective procedure
in Kohn's algorithm
was given by Catlin-D'Angelo \cite[Section~5]{CD10}.
Triangular systems were first introduced by D'Angelo~\cite{D95} under the term {\em regular coordinate domains}, where also an effective procedure
for obtaining subelliptic estimates was provided.
See also 
D.W.~Catlin and J.-S.~Cho \cite{CC08} and
T.V.~Khanh and G.~Zampieri \cite{KhZa14} for subelliptic estimates 
for triangular systems by a different method,
and
A.~Basyrov, A.C.~Nicoara and the second author
\cite{BNZ17}
for an approximation
of general
smooth pseudoconvex boundaries
by triangular systems of
sums of squares matching
the Catlin multitype.
We next mention that A.C.~Nicoara \cite{N14}
proposed a construction for the termination of the Kohn algorithm in the real-analytic case with an indication of the ingredients needed for the effectivity.
We further refer to 
\cite{K79, K84, D95, DK99, S01, S02, K04, S05, S09, CD10, S10, S17}
for more profound and extensive discussion
of subelliptic multipliers
(see also \cite{Ch06} for an algebraic approach),
as well as surveys
\cite{Si91, BS99, FS01, M03, St06, MV15}
and books \cite{Mr66, FK72, Tr80, 
Ho90, 
D93, ChS01, O02, Z08, St10, Ta11, Ha14, O15} 
 for the $\bar\d$-Neumann problem in broader context.

In relation to Kohn's foundational work
on the subelliptic multipliers,
we would like to mention a remarkable new development in the field
due to Siu \cite{S17} providing 
new techniques of generating multipliers for general systems of partial differential equations,
including, as a special case,
a new procedure even
for the case of the $\bar\d$-Neumann problem.

Finally we mention the related
important Nadel's multiplier ideal sheaves \cite{N90}
that were originally motivated by the ones
defined by Kohn \cite{K79} and are
in some sense dual to them.
See \cite[\S4]{S01}, \cite[1.4.6]{S09} for more detailed discussions
of the relation between both types of multipliers,
and the expository articles and books 
\cite{S01, S02, De01, L04, S05, S09} for further information.

\subsection{Kohn's algorithm for special domains
and subelliptic estimates}\Label{js}
We shall consider so-called {\em special domains}
as introduced by Kohn in \cite{K79},
defined locally near the origin by holomorphic functions
in the first $n$ variables:
\beq\Label{om}
	\Omega:= \{
	\Re(z_{n+1})+\sum_{j=1}^m |F_j(z_1,\ldots, z_n)|^2<0
	\} \subset \C^{n+1},
\eeq
where $F_1,\ldots,F_m$ is any collection of holomorphic functions
vanishing at the origin in $\cn$
(and defined in its neighborhood).

Understanding Kohn's multipliers
for the special domains
is important, on the one hand, due to 
their link with local analytic and algebraic geometry,
and on the other hand, 
through their connection with more general cases
via D'Angelo's construction
of associated families of holomorphic ideals
\cite[Chapter~3]{D93}
and Siu's program
relating special domains approach
with general real-analytic and smooth cases
\cite[II.3, II. 4]{S10}.

Next recall the {\em Kohn's algorithm} or, more precisely,
its holomorphic variant for
special domains \eqref{om} (corresponding to $q=1$ in the notation of \cite{K79}
and to algebraic geometric formulation in \cite[2.9.4]{S17}).
For any set $S$ of germs of holomorphic functions at $0$ in $\cn$,
consider the ideal $J(S)$
generated by all function germs $g$ satisfying
$$
	df_1\wedge\ldots\wedge df_n =
	g\, dz_1\wedge \ldots\wedge dz_n , \quad
f_1,\ldots, f_n\in S,
$$
i.e.\ by all {\em Jacobian determinants}
\beq\Label{jac0}
	g =
	\frac{\d(f_1,\ldots,f_n)}{\d(z_1,\ldots,z_n)} =
\det\left(\frac{\d f_i}{\d z_j}\right)
.
\eeq
Then the Kohn's {\em $0$th multiplier ideal} for \eqref{om} is defined as the radical
\beq\Label{1st}
	I_0:=\sqrt{J(S)},
	\quad
	S: = \{F_1, \ldots, F_m\},
\eeq
and for any $k>0$, the Kohn's {\em $k$th multiplier ideal} is
defined inductively by
\beq\Label{kth}
	I_k:= \sqrt{J(S\cup I_{k-1})}.
\eeq
The obtained increasing sequence of ideals
$
	I_0 \subset I_1 \subset ...
$
is said to {\em terminate at $k$} if $I_k$ contains the unit $1$.

Kohn proved in \cite{K79} that
the necessary and sufficient condition for the ideal termination
for special domains 
(in fact, for his more general algorithm applied
to domains with {\em real-analytic} boundaries)
 is the finiteness of the D'Angelo type of $\d\Omega$ at $0$.
Note that Kohn's algorithm is actually defined for
domains with arbitrary smooth boundaries,
in which generality,
however, the corresponding termination
remains a major open problem.
For a special domain \eqref{om}, the type equals $2\t(S)$,
where $S$ is given by \eqref{1st} and
\beq\Label{type}
	\t(S):= \sup_\g \inf_{f\in S} \frac{\nu(f\circ \g)}{\nu(\g)}
\eeq
is the type of the set $S$ of holomorphic function germs,
where the supremum is taken over all
nonconstant
germs of holomorphic curves
$
	\g\colon (\C,0)\to (\cn,0),
$
and $\nu(g)$ denotes the vanishing order of a smooth vector function $g$ at $0$,
given by the lowest order of its nonvanishing partial derivative at $0$.
We refer to \cite{D82, D93} for more detailed 
discussions of finite type
and \cite{M92b, BS92, FIK96, D17, MM17, Z17}
for equivalent characterizations in 
various particular cases, see also \cite{FLZ14}.



It follows from Kohn's work \cite{K79}
that 
ideal termination 
at a boundary point
$p\in \d\Omega$
leads to 
a {\em subelliptic estimate of some order of subellipticity $\eps>0$}  
at that point, namely
\beq\Label{subel}
	\|u\|_\eps^2
	\le
	C(
		\|\bar\d u\|^2
		+ \|\bar\d^* u\|^2
		+\|u\|^2
	),
\eeq
where 
$\|\cdot\|_\eps$ 
and 
$\|\cdot\|$
are
respectively the tangential Sobolev norm
of the (fractional) order $\eps$
and
the standard $L^2$ norm on $\Omega$,
$u$ is any $(0,1)$ form
in the domain of the adjoint operator $\bar\d^*$
(with respect to the standard $L^2$ product on $\Omega$),
smooth up to the boundary
and with compact support in
a fixed neighborhood $U$ of $p$ in the closure $\1\Omega$,
such that both $U$ and the real constant $C>0$
are independent of $u$,
see \cite[Definition 1.11]{K79} for more details.
(Note that Kohn obtained
his results for $(p,q)$ forms with arbitrary $p,q$.)
We mention that,
even though the estimate \eqref{subel}
depends on the choice of local coordinates
(or the metric used in the $L^2$ product),
the subellipticity property
(i.e.\ the existence of an estimate \eqref{subel}
for given $\eps$)
is invariant \cite{Sw72, Ce08, CSt09}.

In the same paper,
Kohn further proved that
his algorithm does terminate
for pseudoconvex domains with real-analytic boundaries of finite type,
leading to subelliptic estimates,
based on a result of 
K.~Diederich and J.E.~Forn\ae ss~\cite{DF78}
(see also E.~Bedford and J.E.~Forn\ae ss~\cite{BF81},
Siu~\cite[Part~IV]{S10} 
and Kohn~\cite{K10}).
For pseudoconvex domains with  smooth boundaries of 
D'Angelo finite type,
subelliptic estimates are due to
D.W.~Catlin~\cite{C87} by a different method,
whereas the termination
of Kohn's algorithm in that generality
remains a major open problem
(see e.g.\ the first problem in \cite[Section~14]{DK99}).
Note that the finite type condition 
is also necessary for subelliptic estimates
due to the work of P. Greiner~\cite{G74} in $\C^2$ and
D.W.~Catlin~\cite{C83} in $\C^n$.

It is crucial to point out 
the particularly remarkable feature
of Kohn's procedure
that allows us to 
study functional-analytic 
estimates such as \eqref{subel}
with purely geometric methods
applied to ideals of holomorphic functions.
%
%
%
%
Consequently, our approach here
is accessible to any geometer
without familiarity
with the functional-analytic aspect
of the problem.

\subsection{Multipliers and effectivity}\Label{eff}
Recall \cite{K79} that
a germ at $p\in\d\Omega$ 
of a smooth function on the closure $\1\Omega$
is a {\em subelliptic multiplier
with order of subellipticity $\eps$} if,
for some representative $f$ of the germ,
 the estimate similar to \eqref{subel}
with $\|u\|_\eps^2$ replaced by $\|fu\|_\eps^2$
holds under the same assumptions, namely
\beq\Label{subel-f}
	\|fu\|_\eps^2
	\le
	C(
		\|\bar\d u\|^2
		+ \|\bar\d^* u\|^2
		+\|u\|^2
	),
\eeq
where both $U$ and $C$ may depend on $f$.
It follows from Kohn's work
\cite[Section~4 and 7]{K79}
that the ideals $I_k$
as defined in \eqref{kth}
consist of subelliptic multipliers,
i.e.\ satisfy \eqref{subel-f}
{\em with some order of subellipticity $\eps$}
that may depend on the actual multipliers.
More precisely,
Kohn showed that 
taking
a Jacobian determinant in \eqref{jac0} reduces the
order of subellipticity by $1/2$,
whereas taking a root of order $s$
in the radical
reduces it by $1/s$.
{\em It is the last step
where an effective
control of $\eps$ may get lost,}
as it is a priori not clear
what root order is required
in order to obtain the full radical.
In fact, examples of Heier \cite[Section~1.1]{He08}
and Catlin-D'Angelo
\cite[Proposition~4.4]{CD10}
illustrate precisely 
that, i.e.\
the lack of control of $\eps$
as the parameter $K$ in the example
goes to infinity,
whereas the type remains bounded
(see Section~\ref{cd-ex} below).
See also Siu~\cite[4.1]{S17}
for a more elaborate and detailed explanation of this 
important phenomenon.
A perturbation of Catlin-D'Angelo's example
is treated in Section~\ref{cd-ex} below.

In order to regain the effectiveness,
one needs to restrict the 
the number of steps and the
root orders allowed in the radicals
in terms of only the type and dimension,
leading to  an
{\em effective Kohn algorithm}  
in the terminology of Siu \cite[2.6]{S17}).
In addition, it is desired to have
algorithmic selection rules
for constructing sequences of the actual multipliers ending with $1$.
Our goal here is to obtain a fine effectiveness control in terms of our new invariants (that are in turn controlled by the type)
and a new selection procedure for the multipliers leading to an effective termination.
In particular, we are going to the extreme to avoid 
taking radicals until the very last step,
see Remark~\ref{feat} below for details.

%

\subsection{Applications and further directions}
Among notable applications of subelliptic estimates \eqref{subel},
J.J.~Kohn and L.~ Nirenberg \cite{KN65}
proved that the latter imply 
{\em local boundary regularity} of the Kohn's
solution of the $\bar\d$-Neumann problem
$\bar\d u = f$,
i.e. the solution $u$ is smooth at those boundary points
where $f$ is.
See \cite{K72, DK99} and the references therein
for more detailed discussions of this 
and many other applications.



Another important application
of subelliptic estimates,
specifically demonstrating the 
{\em importance
of the effectivity},
is a lower bound on the Bergman metric
directly related to the order of subellipticity,
due to J.D.~McNeal \cite{M92}.
The effective control in subelliptic estimates
also plays important role
in the construction of peak functions
by J.E.~Forn\ae ss and J.D.~McNeal 
\cite{FM94}
and in other results.

It should be noted that 
applications of Kohn's algorithm
and its effectiveness
are not limited to subelliptic estimates.
In their remarkable paper \cite{DF79},
K.~Diederich and J.E.~Forn\ae ss
discovered a direct way of using Kohn's multipliers
to construct so-called plurisubharmonic ``bumping''
functions for arbitrary domains with
real-analytic boundaries of finite type.
The bumping functions
are subsequently applied
in the same paper
for Kobayashi metric estimates
and H\"older regularity of proper 
holomorphic maps.
However, the exponents
in the crucial estimates
could not be
effectively controled
(in terms of the type and dimension)
due to some steps
involving Lojasiewicz inequality.
Here effective 
procedures for generating multipliers
would allow for more explicit 
quantitative conclusions.

In a more recent work,
Kohn \cite{K00, K02, K04}
has demonstrated some new
use of subelliptic multipliers
by relating them to certain new
microlocal subelliptic multipliers
on real hypersurfaces,
and establishing
 hypoellipticity
of the $\square_b$ and
 the $\bar\d$-Neumann operator.
 In \cite{K05} Kohn introduced
 an analogue of his theory of subelliptic multipliers
 for 
 new classes of
 differential operators.
  See also
L.~Baracco~\cite{Ba15}
 and
 L.~Baracco, S.~Pinton and G.~Zampieri~\cite{BaPZa15}
for recent related results.

Inspired by Kohn's original subelliptic multipliers,
analogous notion of
multiplier ideals for the compactness estimate
were studied by 
M.~\c Celik~\cite{Ce08},
E.J.~Straube~\cite{St08},
in their joint work~\cite{CSt09}
and by M.~\c Celik and Y.E.~Zeytuncu~\cite{CZ17}.
Finally, 
we mention results
by D.~Chakrabarti and M.-C.~Shaw~\cite{CS11}
 connecting properties of
the Kohn's solution for individual domains 
with corresponding properties for 
to their products,
with some of their results recently used by
\mbox{X.-X.~Chen} and S.K.~Donaldson \cite{ChD13}
to study rigidity properties of complex structures.

\subsection{Main results}
Our main new tool
is the following invariant number
associated to a set $S$ of germs
of holomorphic functions at $0$ in $\cn$,
a germ of analytic subvariety $(V,0)$ in $\cn$
and an integer $k$:
\beq\Label{k-type}
	\t^k_V(S):= \sup_\g \inf_{f\in S} \frac{\nu(j^kf\circ \g)}{\nu(\g)},
	\quad k\ge 0,
\eeq
where $j^kf = (\d^\a f)_{|\a|\le k}$ is the vector of all partial
derivatives up to order $k$,
and the supremum is taken over the set of all
nonconstant germs of holomorphic maps
$\g\colon (\C,0)\to (V,0)$,
and the vanishing order $\nu$ is as above.
We call $\t^k_V$ the {\em $k$-jet type of $S$ along $V$}.
In particular, for $k=0$, we obtain the D'Angelo type \eqref{type}.
It is easy to see that the $k$-jet types form a non-increasing sequence
for $k=0,1,\ldots$, and become equal to $0$ whenever $k$
is greater or equal the minimum vanishing order of a germ in $S$.
Note that, even though the space of curves in \eqref{k-type} is infinite-dimensional,
the computation of the $k$-jet type in $\C^2$ can be reduced
to certain finite number of curves
by a result of J.D.~McNeal and A.~N\'emethi \cite{MN05}
(see also \cite{
He08, LT08} for further results in this direction).

Since the effective termination is well-known
if at least one function $F_j$ has order $1$
(see e.g.\ \cite{CD10}),
we shall assume that all functions $F_j$ vanish
of order $\ge 2$ at $0$.
The following is a simplified version (with rougher bounds) of the main result of the paper:

\bt\Label{main}
In the context of Kohn's algorithm for special domains \eqref{om} in $\C^3$
of D'Angelo finite type $\le 2T$ at $0$,
there is an effective algorithmic construction of a sequence
of multipliers
\beq\Label{l}
	f_1,\ldots, f_l,
	\quad
	l\le T(T-1)+4,
\eeq
with $f_l=1$,
based on the invariants \eqref{k-type}.
Every $f_j$ is obtained by taking
a Jacobian determinant of a linear combination of elements in the set
\beq\Label{mult-set}
	\{F_1, \ldots, F_m, f_1,\ldots, f_{j-1}\},
\eeq
except the two multipliers $f_{l-2}= z_1$, $f_{l-1}= z_2$
that are in the radical of the ideal $I:=(f_1, \ldots, f_{l-3})$
with a root order $s\le T^2(T-1)^3$, i.e.\
$z_1^s, z_2^s\in I$.
\et

\br\Label{feat}
We would like to point out the following particular features
in our multiplier construction in comparison to other known procedures.
From the three different procedures in Kohn's construction,
namely {\em (1) taking Jacobian determinants, (2) forming ideals and (3) taking the radicals},
only the first one is used in our process to obtain a pair of
multipliers generating an ideal of finite (effectively controlled) multiplicity.
Only then the ideal is formed and the radical is taken
to obtain the linear multipliers, leading to the termination.

In particular, our procedure possesses certain
stability under high order perturbations,
as illustrated in \S\ref{cd-ex}.
Such stability is generally not available
in procedures based on taking radicals.
For instance, in the ring of germs of holomorphic functions
at $0$ in $\C^2_{z,w}$,
$f=w$ is in the radical of
the ideal generated by $f^2 = w^2$,
but any perturbation $g=f^2 + z^m = w^2 + z^m$
with $m$ odd generates an ideal $I_m$
having no other germs in its radical,
i.e.\ $\sqrt{I_m} = I_m$
(since the zero variety of $I_m$
is locally irreducible at $0$).
This problem does not occur in our construction
because we only take radicals from ideals
of finite codimension,
where any germ vanishing at $0$
is in the radical.
\er

\bc\Label{main-cor}
For special domains \eqref{om} of finite type $\le 2T$ at $0$ in $\C^3$,
a subelliptic estimate \eqref{subel} at $0$
holds
with the order of subellipticity
$$
    \eps \ge \frac1{2^{l-1}s} \ge \frac1{2^{T(T-1)+3} \, T^2(T-1)^3},
$$
where $l$ is the number of multipliers in \eqref{l} and
 $s$ is the radical root order in Theorem~\ref{main}.
\ec

\bpf
It follows from Kohn's work \cite[Sections 4 and 7]{K79},
that in the context of Theorem~\ref{main}, 
a Jacobian determinant of the $F_j$
has the order of subellipticity $\ge 1/4$
(see \cite[(4.29, 4.64)]{K79} and \cite[2.9.2]{S17}),
and taking any further Jacobian determinant reduces the
order of subellipticity by $1/2$
(see \cite[(4.42, 4.64)]{K79}),
whereas taking a root of order $s$
reduces it by $1/s$ (see \cite[4.36]{K79}).
Applying these calculations to the construction
in Theorem~\ref{main}, we obtain the desired conclusion.
\epf

Note that we obtained more refined bounds than those given in Theorem~\ref{main}
and Corollary~\ref{main-cor}
in terms of our new invariants \eqref{k-type}.
Also we illustrate in Section~\ref{cd-ex} how our procedure can be applied
to the Catlin-D'Angelo's example as well as its higher order perturbations.

\subsection{Overview of our procedure}
Following \cite{S10, S17}, we call the functions $F_j$ in \eqref{om}
{\em pre-multipliers}, to distinguish them from
the {\em multipliers} obtained via the algorithm.
It is important to emphasize that the pre-multipliers
are only used inside the Jacobian determinants \eqref{jac0}
but are never added to the ideals directly.

On a large scale, there are {\em $3$ major steps},
 each reducing the dimension of the variety defined by the multipliers
(resembling Kohn's original algorithm for real-analytic hypersurfaces
 \cite{K79}).

The {\em first step} consists of constructing
the first multiplier $f_1$ as
Jacobian determinant
from the pre-multipliers
with effectively bounded vanishing order at $0$.
Our method gives a bound of at most $\le T(T-1)$.
In fact, a finer bound is given in terms of the $k$-jet types
(as defined by \eqref{k-type}) of the pre-multipliers
along their zero curves, see \eqref{det-order}.
Note that other bounds are known from the work of
D'Angelo \cite{D82,
D93}, Siu \cite{S10, S17}
and Nicoara \cite{N12}.
Any known effective bound $d$ can be used at this step to proceed
with our construction.

The {\em second major step}
aims to reduce the dimension of the variety
$$
	V:= \{f_1=0\}
$$
from $1$ to $0$.
Our method here is based
on a sequence of {\em minor steps}
constructing new multipliers
that gradually reduce the jet order $k\ge 0$
for which the $k$-jet type along $V$ can be effectively bounded.
The construction starts with $k=d\le T(T-1)$
(the vanishing order of $f_1$),
and at every minor step,
the order $k$ is reduced from $k_j$ to $k_{j+1}<k_j$,
where the new $k_{j+1}$-jet type along $V$
gets an effective bound equal to the previous
bound for the $k_j$-jet type plus at most
$(k_j - k_{j+1})(T-1)$.
In other words, lowering the jet order by a number $N$
increases the type bound by $N(T-1)$.
In fact, the method gives a finer bound
with $T$ replaced by the vanishing order
of pre-multipliers only along curves in $V$.
Based on the configuration of the $k$-jet types,
multiple possible choices for $k_j$ are available
(see Section~\ref{4} for details),
where fewer Jacobian determinant iterations
can be traded for possibly higher vanishing order bound and vice versa.
At the end of this major step,
we obtain a multiplier $f_{l-3}$
whose type (i.e.\ the ($0$-jet) type as defined for $k=0$ in \eqref{k-type}) along $V$
is effectively bounded by an estimate
no worse than $d(T-1)$
(where $d\le T(T-1)$ is the
vanishing order of $f_1$).
The proof is based on the core technical
results in Section \ref{4}
with examples given
illustrating the sharpness of the assumptions.

Finally, the {\em third major step}
consists of taking the coordinate functions
$f_{l-2}:=z_1$ and $f_{l-1}:=z_2$
in the radical of the finite type ideal
$I(f_1, f_{l-3})$,
and subsequently
their Jacobian determinant $f_l=1$.
The corresponding radical root order
can be effectively bounded
by
$$
	td \le d^2(T-1) \le T^2(T-1)^3
$$
 in terms of
 the vanishing order $d$ of $f_1$
and the type $t\le d(T-1)$ of $f_{l-3}$ along $V$.
See Lemma~\ref{powers} 
for the bound $td$.

\section{Vanishing orders, contact orders and jets}

\subsection{Normalised vanishing order}
We write $f \colon (\cn, 0)\to \C$ for a germ at $0$
of a holomorphic function in $\cn$
(without specifying the value $f(0)$),
and
$$
	\nu(f) := \min \{ |\a| : \d_z^\a f (0) \ne 0\} \in \N\cup \{\infty\},
\quad \N= \{0, 1, \ldots\}
$$
for the {\em vanishing order} of $f$, 
also called {\em multiplicity} in the literature,
 (the minimum is $\infty$ if the set is empty), where
$$
	\a=(\a_1,\ldots, \a_n)\in \4N^n,
	\quad
	|\a| := \a_1 + \ldots + \a_n,
$$
is a multiindex and
$$
	\d_z^\a = \d_{z^1}^{\a_1} \ldots \d_{z_n}^{\a_n}
$$
is the corresponding partial derivative with respect to chosen coordinates
$(z_1,\ldots, z_n)\in \cn$.
Clearly $\nu(f)$ does not depend on the choice of local holomorphic coordinates.

More generally, for any set $S$ of germs of holomorphic maps
(on the same $\cn$),
define its vanishing order to be the minimum
vanishing order for its elements:
$$
	\nu(S) := \min \{ \nu(f) : f\in S\}.
$$
It is clear that $\nu(S)=\nu(I(S))$,
where $I$ is the ideal generated by $S$.
In particular, for any holomorphic map germ
$$
	f = (f_1,\ldots,f_m)\colon (\cn,0)\to \C^m,
$$
define its vanishing order to be the vanishing order
of the set of its components,
i.e.\
$$
	\nu(f) := \min_{j=1,\ldots,m} \nu(f_j).
$$
It is again easy to see that $\nu(f)$
also does not depend on the choice of local holomorphic
coordinates in $\C^m$ in a neighborhood of $f(0)$.

Now following D'Angelo~\cite{D82, D93}, for every germ of a
holomorphic map
$$
	\g\not\equiv 0 \colon (\C^m,0) \to (\cn, 0),
	\quad
	f\colon (\cn,0) \to \C^\ell,
$$
define its {\em normalized vanishing order of $f$ along $\g$}
by
\beq\Label{normalized}
	\nu_\g (f) := \frac{\nu(f\circ \g)}{\nu(\g)}.
\eeq
In case $m=1$ (considered here),
the normalized vanishing order
is invariant under singular parameter changes
$\g \mapsto \g\circ \phi$,
where $\phi\colon (\C,0)\to (\C,0)$
is any nonconstant germ of a holomorphic map.
Similarly, for a set $S$ of holomorphic map germs, define
its normalized vanishing order along $\g$ by
$$
	\nu_\g(S) := \min \{ \nu_\g(f) : f\in S\},
$$
and again $\nu_\g(S) =\nu_\g(I(S))$
for the ideal $I(S)$ generated by $S$.

Note that in general, the normalized vanishing
order may not be an integer,
and, in fact, can be any rational number $p/q>1$
as the example
$$
	\g(t):= (t^p, t^q),
\quad	f(z,w) := z,
	\quad \nu_\g(f) = p/q,
$$
shows.

It is easy to see that given $f$, one has $\nu_\g(f)= \nu(f)$
for a generic linear map $\g$.
In general, $\nu_\g(f)$ can only become larger:

\bl\Label{g}
The normalized vanishing order of $f$
along any map $\g$
is always greater or equal than the vanishing order:
\beq
	\nu_\g(f) \ge \nu(f).
\eeq
\el

\bpf
Expanding into a power series $f= \sum f_\a z^\a$
with $|\a| \ge \nu(f)$,
and substituting $\g$,
we conclude
$\nu(f\circ \g) \ge \nu(f) \, \nu(\g)$ as desired.
\epf

\subsection{Contact order and finite type}

Let $S$ be a set of germs of holomorphic functions
$f\colon (\cn,0)\to \C$.
Then in view of Lemma~\ref{g} (and the remark before it),
the vanishing order satisfies
$$
	\nu(S) = \min_\g \nu_\g(S).
$$
At the opposite end, define (as in \cite{D82}
and \cite[2.3.2, Definition~9]{D93})  the {\em contact order}
or {\em the type} of $S$
by
$$
	\t(S) := \sup_\g \nu_\g(S) = \sup_\g \min_{f\in S} \nu_\g(f).
$$
(Note that the minimum in the right-hand side is always achieved,
since the denominator in \eqref{normalized} is fixed.)
We say that the set $S$ is of {\em finite type} $T=\t(S)$
if the latter number is finite.
Since $\nu_\g(S) = \nu_\g(I(S))$
for the ideal $I(S)$ generated by $S$,
we also have
$$
	\nu(S) = \nu(I(S)),
	\quad
	\t(S) = \t(I(S)).
$$

\subsection{Contact order along subvarieties.}
Let $f \colon (\cn, 0)\to \C^m$ be a germ of a
holomorphic map as before
and $V\subset \cn$ any complex-analytic subvariety passing through $0$.
Then the {\em contact order} of $f$ along $V$ is defined by
\beq\Label{contact}
\t_V(f):= \sup_\g \nu_\g (f)
\eeq
where the supremum is taken over all non-constant germs of holomorphic maps $\g\colon (\C,0)\to (V,0)$.

In particular, if $V$ is (a germ of) an irreducible curve at $0$,
the right-hand side in \eqref{contact} is independent of $\g$,
since in this case,
any two nonzero germs $\g\colon (\C,0)\to (V,0)$
are related by a sequence of singular reparametrizations.
In general, the contact order along $V$
is the maximum contact order along
the irreducible components of $V$ at $0$.
%
%
%

\subsection{Jet vanishing orders}
The main new tool in this paper is the following notion
of jet vanishing order.

For every integer $k\ge 0$, and a germ of a holomorphic map
$f\colon (\cn,0)\to \C^m$,
consider its $k$-jet $j^kf$,
which in local coordinates can be regarded
as a germ of a holomorphic map
$$F=j^k f = (\d^\a_z f)_{|\a|\le k}\colon (\cn,0)\to \C^N,$$
(for suitable integer $N$ dependent on $n$, $m$ and $k$)
given by all partial derivatives of the components of $f$ up to order $k$.
Then define the {\em $k$-jet vanishing order of $f$} by
\beq\Label{jet-order}
	\nu^k(f) : = \nu(j^kf) = \min_{|\a|\le k} \nu(\d^\a_z f),
\eeq
which is also equal to $\max( \nu(f) -k, 0)$.

Further, define
the {\em $k$-jet normalized vanishing order of $f$ along
a nonzero germ $\g\colon (\C^m,0)\to(\cn,0)$} by
$$
	\nu^k_\g(f):= \nu_\g(j^k f)= \min_{|\a|\le k} \nu_\g(\d^\a_z f),
$$
(which are the minimum vanishing orders
of the partial derivatives up to order $k$ along $\g$),
and the {\em $k$-jet contact order of $f$ along
a subvariety $V$} by
\beq
\t^k_V(f):=  \t_V(j^k f) = \sup_\g \nu^k_\g(f),
\eeq
where as in \eqref{contact}, the supremum is taken over all
non-constant germs of holomorphic maps $\g\colon (\C,0)\to (V,0)$.
Note that again, this definition is independent of holomorphic local coordinates,
and we have the monotonicity:
\beq
	\nu^{k_1}(f) \ge \nu^{k_2}(f),
	\quad \nu^{k_1}_\g(f) \ge \nu^{k_2}_\g(f),
	\quad \t^{k_1}_V(f) \ge \t^{k_2}_V(f),
	\quad k_1 \le k_2.
\eeq
As consequence of \eqref{jet-order}
and the definitions, we have the stabilisation property
\beq
	\nu^k(f) = \nu^k_\g(f) =  \t^k_V(f) =0,
	\quad k \ge \nu(f),
\eeq
for any $\g$ and any $V$.

Similarly, for any set $S$ of germs of holomorphic maps
$f\colon (\cn,0)\to \C^m$,
define the numbers
$$
	\nu^k(S) : = \nu (j^k S),
	\quad
	\nu^k_\g(S) : = \nu_\g (j^k S),	
	\quad
	\t^k(S) : = \t(j^k S),
	\quad
	\t^k_V(S) : = \t_V(j^k S),	
$$
that, in fact, depend only on the ideal generated by $S$,
where $j^k S$ denotes the set of all $k$-jets of elements in $S$.

\section{Control of jet vanishing orders for Jacobian determinants}
\Label{4}

This section is the technical core of the paper.
Our goal is to obtain fine control of how the jet vanishing orders
along curves change under taking Jacobian determinants.
The main idea is to have a control of certain ``good'' terms
in the multiplier expansion and to avoid possible cancellations
with other terms.
This is achieved via certain technical conditions
comparing jet vanishing orders for different jet orders.
We also illustrate by examples that our technical conditions are sharp.

For a holomorphic curve germ
$\g\not\equiv 0 \colon (\C,0)\to (\C^2,0)$,
we can always change coordinates $(z,w)\in\C^2$ to achieve
\beq\Label{ab}
    \g = (\a, \b), \quad \nu(\a)>\nu(\b)\ge 1.
\eeq

We first give a
comparison condition between jet vanishing orders
$\nu_\g^{k-1}(F)$ and $\nu_\g^k(F)$
 of a function germ $F$ along $\g$
to guarantee that
the minimum vanishing order along $\g$
among partial derivatives of $F$
is achieved for its transversal derivatives.

\bl\Label{transversal}
Given $\g$ satisfying \eqref{ab} and a germ of a holomorphic map
$$
	F\colon (\C^2, 0)\to \C,
$$
assume that the jet vanishing orders of $F$ along $\g$ satisfy
\beq\Label{smaller0}
	\nu_\g^{k-1}(F) > \nu_\g^k(F) + 1
\eeq
for some $k\ge 1$.

Then
\beq\Label{min-transv}
	\nu_\g^k(F)
	= \nu_\g(\partial_z^{k} F),
\eeq
i.e.\ the minimum vanishing order among all partial derivatives
of order $k$ is achieved for the $k$th derivative
transversal to (the image of) $\g$:
$$
	\nu_\g(\partial_z^{k} F) =
    \min_{a+b \le k} \nu_\g(\partial_z^a\partial_w^{b} F).
$$
\el

\bpf
Assume on the contrary that
$$
	\nu_\g(\partial_z^a\partial_w^{b} F)
	< \nu_\g(\partial_z^{k} F)
$$
for some $a<k$ and $b\le k-a$.
We can choose $a$ and $b$ such that the minimum vanishing order is achieved, i.e.\
\beq\Label{a-chosen}
	\nu_\g^k(F)
	=\nu_\g(\partial_z^a\partial_w^{b} F)
	< \nu_\g(\partial_z^{k} F)
	.
\eeq
In view of \eqref{smaller0}, we must have the top order $a+b=k$ here,
in particular, $b=k-a\ge 1$.
Then
\beq\Label{smaller}
	\nu(\partial_z^a\partial_w^{b} F(\gamma))
	\le \nu (		
		\partial^{a+1}_z\partial^{b-1}_w F(\gamma)
	)
	< \nu\left(
		\partial^{a+1}_z\partial^{b-1}_w F(\gamma)\frac{\a'}{\b'}
	\right),
\eeq
as we have assumed $\nu(\a) > \nu(\b)$.
Differentiating in the parameter of $\g$, we obtain
$$
	\left(
		\partial_z^a\partial_w^{b-1} F(\gamma)
	\right)'
	=\b'\left(\partial^{a+1}_z\partial^{b-1}_w F(\gamma)\frac{\a'}{\b'}
+\partial_z^a\partial_w^{b} F(\gamma)\right),
$$
from which, using \eqref{smaller}, we conclude
\beq\Label{der-order}
	\nu\left(
		(
			\partial_z^a\partial_w^{b-1} F(\gamma)
		)'
	\right)
	=\nu(\partial_z^a\partial_w^{b} F(\gamma))+\nu(\b)-1.
\eeq
Since our definition of the $k$-jet vanishing order implies
$$
	\nu\left(
		(
			\partial_z^a\partial_w^{b-1} F(\gamma)
		)'
	\right)
	\geq \nu(\b)
	\, \nu_\g^{k-1}(F)-1,
$$
we have in view of \eqref{a-chosen} and \eqref{der-order},
\beq\Label{stuck}
	\nu(\b)
	\, \nu_\g^k(F)
	= \nu(\partial_z^a\partial_w^{b} F(\gamma))=\nu\left(
		(
			\partial_z^a\partial_w^{b-1} F(\gamma)
		)'
	\right)-\nu(\b)+1
	\geq \nu(\b)
		\, \nu_\g^{k-1}(F)-\nu(\b).
\eeq
On the other hand, by our assumption \eqref{smaller0}, we have
$$
	\nu(\b)
	\, \nu_\g^{k-1}(F)-\nu(\b)
	>\nu(\b) \, \nu_\g^k(F),
$$
which contradicts \eqref{stuck}
completing the proof.
\epf

\be
For
$$
    \g(t) = (0, t),
    \quad
    F(z,w) = w^2 + zw^2,
$$
compute
$$
    \nu^0_\g(F)= \nu_\g(F)  = 2,
    \quad
    \nu_\g(\d_z F) = 2,
    \quad
    \nu_\g(\d_w F) = 1,
    \quad
    \nu^1_\g(F) = 1.
$$
Hence \eqref{smaller0} is violated
and
 the conclusion of Lemma~\ref{transversal} fails.
This shows the sharpness of our assumption \eqref{smaller0}.
\ee

The following is our main technical result.
Again, we need to assume a comparison condition
between different jet vanishing orders
to guarantee that the terms from lower order jets
do not cancel with the terms providing the needed vanishing order control.
Here we use different letters $F$ and $\phi$ for the function germs
to emphasize their different roles.
The role of $F$ is to provide the jet vanishing orders $\nu^k_\g(F)$,
whereas for $\phi$, only the usual vanishing order $\nu_\g(\phi)$ is used.

\begin{Lem}\Label{main-tech}
Given germs of holomorphic maps
$$
	\g\not\equiv 0 \colon (\C,0)\to (\C^2,0),
	\quad
	F,\phi\colon (\C^2, 0)\to \C,
$$
suppose that
\beq\Label{phi-2}
	\nu(\phi)\ge 2,
\eeq
 and for some $k\ge 1$,
\begin{equation}\Label{main1}
	\nu_\g^{k-1}(F) >
	\nu^k_\g(F) + \nu_\g(\phi)-1,
\end{equation}
where $\nu_\g^k$ is the $k$-jet vanishing order along $\g$.
Then the Jacobian determinant
$$
	G := \det
\begin{pmatrix}
\partial F\cr
\partial\phi
\end{pmatrix}
$$
satisfies
$$
	\nu_\g^{k-1}(G)=\nu_\g^k(F)+ \nu_\g(\phi) -1.
$$
\end{Lem}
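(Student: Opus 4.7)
The plan is to bound $\nu_\g^{k-1}(G)$ from above and below by decomposing each partial derivative of $G$ into a sum of Jacobian determinants and applying a chain-rule identity inside each determinant.

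After a coordinate change we may take $\g = (\a, \b)$ with $\nu(\a) > \nu(\b) =: T$. Set $M := \nu_\g^k(F)$ and $N := \nu_\g(\phi)$. Since $N \ge \nu(\phi) \ge 2$, hypothesis \eqref{main1} is strictly stronger than the hypothesis of Lemma~\ref{transversal}, which therefore yields $\nu_\g(\partial_z^k F) = M$, i.e.\ $\nu(\partial_z^k F(\g)) = MT$. The basic algebraic tool will be the identity
$$
    \b'\bigl(f_z g_w - f_w g_z\bigr)(\g) = f_z(\g)\,(g\circ\g)' - g_z(\g)\,(f\circ\g)',
$$
valid for arbitrary germs $f,g$ and obtained by inserting $(f\circ\g)' = f_z(\g)\a' + f_w(\g)\b'$ and its analogue for $g$. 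A Leibniz expansion then writes $\partial_z^a\partial_w^b G$, for $a+b \le k-1$, as a binomially-weighted sum over $a' \le a$, $b' \le b$ of the Jacobian determinants of the pairs $(\partial_z^{a'}\partial_w^{b'} F,\,\partial_z^{a-a'}\partial_w^{b-b'}\phi)$.

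For the upper bound, I would single out the derivative $\partial_z^{k-1} G$ and its ``main'' summand, the Jacobian of $(\partial_z^{k-1}F, \phi)$ corresponding to $(a', b') = (k-1, 0)$. Substituting into the identity, the first piece $\partial_z^k F(\g)(\phi\circ\g)'$ has vanishing order exactly $MT + NT - 1 = (M+N)T - 1$, while the cross piece $\phi_z(\g)(\partial_z^{k-1} F \circ \g)'$ has strictly greater vanishing order: $\nu(\phi_z(\g)) \ge T$ (from $\nu(\phi) \ge 2$) combines with $\nu_\g(\partial_z^{k-1} F) \ge \nu_\g^{k-1}(F) > M+N-1$ coming from \eqref{main1}. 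Dividing out $\b'$ shows $\nu_\g = M+N-1$ for this main summand. Every other Leibniz summand contains only $F$-derivatives of order $\le k-1$, hence has $\nu_\g > M+N-1$ by the same hypothesis, and so cannot disturb the leading contribution. This gives $\nu_\g^{k-1}(G) \le M+N-1$.

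For the lower bound, the same decomposition is applied to every $\partial_z^a\partial_w^b G$ with $a+b \le k-1$. The main summand $(a', b') = (a, b)$ now produces only the inequality $\nu_\g \ge M+N-1$ via the identity, since the $F$-derivative it involves has order $a+b+1 \le k$, giving $\nu_\g \ge M$ without necessarily equality; every other summand is again strictly subordinate. Combined with the upper bound this yields the desired equality. The main obstacle I expect is the precise ``no-cancellation'' bookkeeping in the main-term analysis: at each invocation of the identity one must verify that $g_z(\g)(f\circ\g)'$ is strictly dominated by $f_z(\g)(g\circ\g)'$, and this is where the strict inequality in \eqref{main1} and the assumption $\nu(\phi) \ge 2$ are simultaneously consumed -- the former supplies the strict gap, and the latter provides the non-strict bound $\nu_\g(\phi_z) \ge 1$ needed to bring the cross term up to the critical threshold.
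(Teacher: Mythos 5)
Your proof is correct and follows the same strategy as the paper's own argument: Leibniz expansion of $\partial_z^a\partial_w^b G$, the chain-rule identity to rewrite the leading Jacobian in terms of $(\phi\circ\g)'$ and $(\partial_z^a\partial_w^b F\circ\g)'$ after dividing by $\b'$, and Lemma~\ref{transversal} (whose hypothesis \eqref{smaller0} you correctly note follows from \eqref{main1} together with $\nu(\phi)\ge 2$) to locate the minimizing derivative at $(a,b)=(k-1,0)$. The only cosmetic difference is that you split the conclusion into separate upper and lower bounds and spell out the Leibniz remainder explicitly as a weighted sum of sub-Jacobians of the pairs $(\partial_z^{a'}\partial_w^{b'}F,\,\partial_z^{a-a'}\partial_w^{b-b'}\phi)$, whereas the paper computes the minimum in one pass and refers to the remainder generically as bilinear error terms in $j^{k-1}F$ and $j^k\phi$.
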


\begin{proof}
By definition, we have
$$
	G =
	\partial_zF \, \partial_w\phi
	-\partial_wF \, \partial_z\phi,
$$
and hence
\begin{equation}\label{main2}
	\partial_z^a\partial_w^b G
	=\partial_z^{a+1} \partial_w^bF
		\, \partial_w\phi
	-\partial_z^a\partial_w^{b+1}F
		\, \partial_z\phi
	+ {\rm error~ terms},
	\quad
	a+b=k-1,
\end{equation}
where the error terms are bilinear expressions in $j^{k-1}F$
and $j^k\phi$.

After a coordinate change if necessary, we may assume that
the assumptions of Lemma~\ref{transversal} are satisfied,
 in particular
$$
	\nu(\g) = \nu(\b)
$$
 and
$\b\not\equiv 0$.

Next our assumption \eqref{main1}
implies that the (normalized) vanishing order of the error terms in \eqref{main2}
along $\g$
 is strictly bigger than
$$
	\nu_\g^k(F)+ \nu_\g(\phi) -1.
$$
Therefore, to prove the lemma, it is enough to show that
\beq\Label{to-show}
	\min_{a+b=k-1}
	\left(
		\nu_\g(
		\partial_z^{a+1}\partial_w^bF
			\, \partial_w\phi
		- \partial_z^a\partial_w^{b+1}F
			\, \partial_z\phi)
	\right)
	=\nu_\g^k(F)+ \nu_\g(\phi) -1.
\eeq

Since
$$
	\left(\partial_z^a\partial_w^bF(\g)\right)'
	= \partial_z^{a+1}\partial_w^bF(\gamma)\alpha'+\partial^a_z\partial_w^{b+1}F(\gamma)\beta'
$$
and
$$(\phi (\g))'=\partial_z\phi(\gamma)\alpha'+\partial_w\phi(\gamma)\beta',$$
 substituting these into the first two terms in the right-hand side of \eqref{main2}, we obtain
\beq\Label{rewr}
	\partial_z^{a+1}\partial_w^bF(\gamma)
		\, \partial_w\phi(\gamma)
	- \partial_z^a\partial_w^{b+1}F(\gamma)
		\, \partial_z\phi(\gamma)
	=\partial_z^{a+1}\partial_w^bF(\gamma)\frac{(\phi (\g))'}{\beta'}-\frac{\left(\partial_z^a\partial_w^{b}F(\gamma)\right)'}
{\beta'}\partial_z\phi(\gamma).
\eeq
Since for $a+b=k-1$,
\begin{eqnarray*}
\nu \left(\frac{\left(\partial_z^a\partial_w^{b}F (\g)\right)'}
{\beta'}\partial_z\phi(\gamma)\right)&=&\nu((\partial_z^a\partial_w^{b}F(\gamma))')-\nu(\beta')+\nu(\partial_z\phi(\gamma))\\
&=&\nu((\partial_z^a\partial_w^{b}F(\gamma)))-1-\nu(\beta)+1+\nu(\partial_z\phi(\gamma))\\
&=&
	\nu(\beta)\left(
		\nu_\g(\partial_z^a\partial_w^{b}F)-1
		+ \nu_\g(\partial_z\phi)
	\right)\\
&\geq &
	\nu(\beta) \left(
		\, \nu_\g^{k-1}(F) - 1 + \nu_\g(\partial_z\phi)
	\right)\\
\end{eqnarray*}
we obtain by \eqref{main1}, \eqref{phi-2} and Lemma~\ref{g},
\beq\Label{bigger}
	\nu \left(\frac{\left(\partial_z^a\partial_w^{b}F(\gamma)\right)'}
	{\beta'}\partial_z\phi(\gamma)\right)>\nu(\beta)
	(\nu_\g^k(F)+ \nu_\g(\phi) - 1),
	\quad a+b \le k-1.
\eeq

On the other hand, we have
\begin{eqnarray*}
	\nu\left(\partial_z^{a+1}\partial_w^bF(\gamma)\frac{(\phi (\g))'}{\beta'}\right)
	&=&
	\nu(\partial_z^{a+1}\partial_w^bF(\gamma))
	+\nu((\phi (\g))')-\nu({\beta'})\\
	&=&
	\nu(\partial_z^{a+1}\partial_w^bF(\gamma))+\nu(\phi (\g))-\nu({\beta})\\
	&=& \nu(\beta)
		\left(\nu_\g(\partial_z^{a+1}\partial_w^bF)+\nu_\g(\phi)-1)\right),
\end{eqnarray*}
where in view of \eqref{min-transv},
the minimum of the vanishing order of the last expression for $a+b=k-1$ is
achieved for $(a,b)=(k-1,0)$ and equals to
\beq\Label{min}
	\nu(\beta)
		\left(
			\nu^k_\g(F)+ \nu_\g(\phi) -1
		\right).
\eeq
Together with \eqref{bigger},
this implies that
the minimum for $a+b=k-1$ of the right-hand side in \eqref{rewr}
equals \eqref{min}.
This gives the desired relation \eqref{to-show},
completing the proof.
\end{proof}

%
%

\be
Let
$$
    F(z,w) = z^k + z^{k-1} w^s, \quad
    \phi(z,w) = w^l + a z w,
    \quad
    \g(t) = (0, t),
    \quad l\ge 2.
$$
Then
$$
    \nu^{k-1}_\g(F)=s, \quad
    \nu^k_\g(F) = 0, \quad
    \nu_\g(\phi) = l.
$$
In particular, our comparison assumption \eqref{main1} is violated
if and only if $s\le l-1$.
Then for the Jacobian determinant $G$ as Lemma~\ref{main-tech}, we have
$$
    G = \det
    \begin{pmatrix}
        k z^{k-1}  + (k-1) z^{k-2} w^s &   s z^{k-1} w^{s-1} \\
        a w & l w^{l-1}  + a(l-s) z w
    \end{pmatrix}
$$
and hence for $s=l-1$ and $a=lk/s$, the terms with $z^{k-1} w^{l-1}$ cancel and we obtain
$$
    \nu^{k-1}_\g(G) = 
    	\min
		\big(
			\nu^{k-1}_\g(z^{k-2}w^{s+l-1}), \nu^{k-1}_\g(z^{k-1}w^{s+1})
		\big)
    = \min( s+l-1, s+1) = l,
$$
failing the conclusion \eqref{main1} of Lemma~\ref{main-tech}.
Hence our assumption \eqref{main1} is sharp.
\ee

As first direct consequence in combination with Lemma~\ref{g},
 we obtain an estimate for the (total) vanishing order
of the Jacobian determinant:
\bc
	Under the assumptions of Lemma~\ref{main-tech},
	$$
		\nu(G) \le \nu^k_\g(F) + \nu_\g(\phi) + k - 1.
	$$
\ec

As next immediate consequence of Lemma~\ref{main-tech}, we obtain:

\bc\Label{down}
Given germs of holomorphic maps
$$
	\g\not\equiv 0 \colon (\C,0)\to (\C^2,0),
	\quad
	F,\phi\colon (\C^2, 0)\to \C,
$$
suppose that
\beq\Label{phi-2'}
	\nu(\phi)\ge 2.
\eeq
Then
$$
	\nu^{k-1}_\g(G) \le \nu_\g^k(F) + \nu_\g(\phi) -1,
$$
where either $G=F$ or
$
G=
\det
\begin{pmatrix}
\partial F\cr
\partial\phi
\end{pmatrix}
$.
\ec

Repeatedly applying
Corollary~\ref{down},
we obtain:

\bc\Label{multi}
Given germs of holomorphic maps
$$
	\g\not\equiv 0 \colon (\C,0)\to (\C^2,0),
	\quad
	f,\phi\colon (\C^2, 0)\to \C,
$$
suppose that
$	\nu(\phi)\ge 2$,
and define $f_{j+1}$ inductively by
$$
f_1 := f,
\quad
f_{j+1} := \det
\begin{pmatrix}
\partial f_{j}\cr
\partial\phi
\end{pmatrix},\quad j= 1,2,\ldots.
$$
Then
for every $k\in \{0, \ldots, \nu(f)\}$,
there exists
$j\in \{1, \ldots, \nu(f)-k+1\}$ such that
$$
	\nu^k_\g(f_{j})\leq (\nu(f)-k)(\nu_\g(\phi)-1).
$$
In particular, for $k=0$,
$$
	\nu_\g(f_{j})\leq \nu(f)(\nu_\g(\phi)-1)
$$
holds for some
$j\in \{1, \ldots, \nu(f) + 1\}$.
\ec

\begin{proof}
Setting
$
	m:= \nu(f)
$,
we obtain from the definition of the $m$-jet vanishing order that
$$
	\nu_\g^m(f_1) = \nu_\g^m(f) = 0.
$$
%
%
Then Corollary~\ref{down} implies
that there exist $j\in\{1,2\}$
such that
$$\nu_\g^{m-1}(f_j)\leq \nu_\g(\phi)-1.$$
Next let $k=m-1$ and
repeat the argument for $F=f_j$,
to conclude that
there exist $j\in\{1,2,3\}$ such that
$$\nu_\g^{m-2}(f_j)\leq 2 (\nu_\g(\phi)-1).$$
We repeat this process to show inductively
that for any positive integer $\ell\leq m$,
there exists $j\in\{1,\ldots,\ell+1\}$
such that
$$\nu_\g^\ell(f_j)\leq (m-\ell) (\nu_\g(\phi)-1)$$
as desired.
\end{proof}

\section{Selection algorithm and Proof of Theorem~\ref{main}}
We shall write $\6C$ for the set of
nonzero germs of holomorphic maps
$(\C,0)\to (\C^2,0)$.
Let $S$ be a set of holomorphic function germs
$(\C^2,0)\to \C$ with minimal vanishing order
$$
	m:=\nu(S) \ge 2,
$$
and the
 finite type
$$
	T:=\t(S).
$$

\subsection{Step 1; selecting a multiplier with minimal vanishing order}
Choose any $f_0=f\in S$
with minimal (total) vanishing order
$$
	\nu(f) = \nu(S) = m.
$$
Then choose any $\g\in\6C$
in the zero set of $f$ (i.e.\ $\nu_\g(f) = \infty$).
Finally choose any $\phi\in S$ with
$$
	\nu_\g(\phi) \le T
$$
and
set
$$
	\mu := \min \{ k-1 + \nu_\g^k(f) : \nu_\g^{k-1}(f) > \nu_\g^k(f) + \nu_\g(\phi) - 1,\,  1\le k \le m \}.
$$
Since $\nu_\g(f) =\infty$, it is easy to see that
$$
	 \mu \le k_0-1+\nu_\gamma^{k_0}(f) \le  k_0-1+(m-k_0)(\nu_\g(\phi) -1)\leq (m-1)(\nu_\g(\phi) -1),
$$
where
$$k_0:=\max\{k:\nu_\g^{k-1}(f) > \nu_\g^k(f) + \nu_\g(\phi) - 1,\,  1\le k \le m \}.$$
Then the multiplier
$$
	f_1 :=
	\det
	\begin{pmatrix}
		\d f\cr
	\d \phi
	\end{pmatrix}.
$$
satisfies for some $k$,
$$
	k + \nu_\g^{k}(f_1) \le \mu + \nu_\g(\phi) - 1
$$
in view of
Lemma~\ref{main-tech}.
In particular, it follows that the vanishing order
\beq\Label{det-order}
	\nu(f_1) \le \mu + \nu_\g(\phi) - 1
		\le m (\nu_\g(\phi) - 1) \le m (T-1).
\eeq
Thus we have found a multiplier $f_1\in J(S)$
(as defined in \S\ref{js})
whose vanishing order is bounded by any
of the above estimates.

\br
The main outcome of this step is to construct a multiplier $f_1$
with controlled vanishing order.
Our method gives an explicit estimate for this order.
Note that the initial function (pre-multiplier) $f_0\in S$ is only used in this step.
Once a multiplier $f_1$ is constructed, it will be used in the sequel
for the curve selection and the computation of jet vanishing orders.
\er

\subsection{Step 2; selecting a multiplier with effectively bounded order along a zero curve of another multiplier}
Now choose any multiplier $f_1$ with
vanishing order
$$
	\nu(f_1)\le d \le m(T-1).
$$
Note that Step 1 yields such $f_1$,
however, any other $f_1$ with a sharper estimate $d$
can be chosen.
Let $\g_i\in \6C$ be germs of curves parametrizing
irreducible components of the zero set of $f_1$.
Choose $\phi\in \span(S)$ with
$$
	\nu_{\g_i}(\phi) \le T,
$$
which can be any generic linear combination
of elements in $S$.
Then Corollary~\ref{multi} implies that
$$
	\nu^{m-1}_{\g_i}(f_j) \le T-1,
$$
where $j\in\{1,2\}$
depending on $i$,
and
$$
	f_2 :=
	\det
	\begin{pmatrix}
		\d f_1\cr
	\d \phi
	\end{pmatrix}.
$$

\bl\Label{41}
Let $\Phi$, $S_0$ be two sets of germs of holomorphic maps $(\C^2,0)\to\C$
and define inductively
$$
	S_{j+1} := S_j \cup
\left\{
		\det
\begin{pmatrix}
\partial f\cr
\partial \phi
\end{pmatrix}
: f\in S_j\cup \Phi, \, \phi\in \Phi
\right\},
\quad
j\ge 0.
$$
Write
$$
	d:= \nu (S_0)
$$
for the minimum vanishing order of functions in $S_0$
and
assume that the type
$$
	\t(\Phi) := \sup_\g \nu_\g(\Phi) = \sup_\g \min_{\phi\in \Phi} \nu_\g(\phi)
$$
is finite. Assume further that
$$\nu(\Phi)\geq 2.$$
Then for $k< d$,  the $k$-jet type of $S_j$ satisfies
$$
	\t^k(S_{d-k}) \le (d-k)(\t(\Phi)-1).
$$
In particular, for $k=0$, we obtain
$$
	\t(S_d) \le d (\t(\Phi)-1).
$$
\el

\bpf
Choose $f\in S_0$ with
$$
	\nu(f) = d
$$
and let $\g$ parametrize a curve in $\mathbb{C}^2$.
By the definition of type,
there exists $\phi\in \Phi$
with
$$
	\nu_\g(\phi) \le \t(\Phi).
$$
Then Corollary~\ref{multi} implies that there exists a sequence $F_k \in S_k$ such that
$$\nu_\gamma^{d-k}(F_k)\le (d-k)(\nu_\gamma(\phi)-1),$$
which completes the proof.
\epf

\subsection{Step 3; selecting a pair of multpliers with
effectively bounded multiplicity and taking the radical}
Let $\Phi$ be the set of pre-multipliers and let
$$S_0:=\left\{
		\det
\begin{pmatrix}
\partial f\cr
\partial \phi
\end{pmatrix}
: f, \phi\in \Phi
\right\}.$$
Then Step 2 implies that there exists an integer $d\leq m(T-1)$ such that
$$\tau(S_d)\leq d(T-1).$$
Choose $F\in S_0$ such that
$$\nu(F)=d.$$
Then by the definition of $\t(S_d)$, for each irreducible component $C_j$ of $\{F=0\}$,
there exists $F_j\in S_d$ such that
$$\nu_{ C_j}(F_j)\leq d(T-1).$$
Choose a generic linear combination $\widetilde F$ of $\{F_j\}$.
Then $\widetilde F$ is a multiplier satisfying
\beq\Label{ft}
    \nu_{C_j}(\widetilde F)\leq d(T-1)
    \text{ for all } j.
\eeq

\begin{Lem}\Label{powers}
Let $I = (F,\widetilde F)$ be the ideal generated by 
$F$ and $\widetilde F$.
Let $d:=\nu(F)$ be the vanishing order
and $t:=\tau_{\{F=0\}}(\widetilde F)$
the contact order of $\2F$ along the 
zero curve of $F$
(i.e.\ the maximal vanishing order
along an irreducible component).
Then the multiplicity 
\beq\Label{mult}
	D(I):=\dim\left(\mathbb{C}\{z,w\}/ I\right)
\eeq
 satisfies $D(I) \le dt$.
\el

\bpf
The proof can be obtained by following the arguments of D'Angelo's proof of Theorem~2.7 in  \cite{D82},
more precisely,
by combining \cite[(2.12)]{D82} with \cite[Lemma~2.11]{D82}.
\epf

\br
A pair $(F,\2F)$ satisfying the assumptions of Lemma~\ref{powers}
can be an called an {\em effective regular sequence}.
Recall that a {\em regular sequence} 
(see e.g.\ \cite[2.2.3, Definition 6]{D93})
is any sequence $(f_1, \ldots, f_s)$ in a local ring $R$,
if each $f_{j+1}$ is a non-zero-divisor in the quotient
$R/(f_1,\ldots, f_j)$ for any $j=1,\ldots, s$
(for $j=1$, the quotient needs to be interpreted
as $R$ itself).
In our case, the additional {\em effectiveness}
of the regular sequence condition
comes from the estimates of the two orders --
the vanishing order of $F$
and the contact order of $\2F$ along the zero curve $C$ of $F$.
Note that for the conclusion of
Lemma~\ref{powers} to hold, we need
to estimate the {\em contact order}
(the maximum order over irreducible components of $C$) 
for $\2F$ rather than its vanishing order along $C$ 
(the minimum order over the irreducible components).
\er

\bc\Label{K}
Under the assumptions of Lemma~\ref{powers},
for $a:=dt$,
one has $h^a\in I$ for any germ of holomorphic function 
$h$ with $h(0)=0$,
i.e.\ $I$ contains the $a$-th power of the maximal ideal.
\ec

\bpf
The statement follows directly from
the inequality $K(I)\le D(I)$ in
\cite[Theorem~2.7]{D82},
 see also \cite[2.1.6, Page 57]{D93},
 where $K(I)$ is the minimal power 
 of the maximal ideal
 contained in $I$
 and $D(I)$ is the multiplicity. 
 \epf

\bpf[Proof of Theorem~\ref{main}]

By Corollary~\ref{K},
 we have $z^a, w^a$ as multipliers,
where
$$a= dt, \quad d\le T(T-1), \quad t \le d(T-1)$$
implying, in particular,
$$
a\le T^2(T-1)^3.
$$
Taking the $a$th roots $z,w$ of $z^a, w^a$, and taking their Jacobian determinant, we
obtain the last multiplier $1$ leading to the algorithm termination.
Furthermore, taking generic linear combinations at each step,
we obtain a single sequence $f_1,\ldots, f_l$ of multipliers
with $f_{l-3}=\2F$ satisfying \eqref{ft},
and hence satisfying the desired properties
as in the statement of Theorem~\ref{main}.
This
completes the proof.
\epf

\section{Perturbations of Heier's and Catlin-D'Angelo's examples}
\Label{cd-ex}
Heier \cite{He08} and
Catlin and D'Angelo \cite{CD10}
gave examples of special domains in
$\C^3$,
where the original Kohn's procedure \cite{K79}
of taking full radicals at every step
does not lead to an effective estimate (in terms of the type and the dimension) for
the order of subellipticity in subelliptic estimate.
The main reason is the lack of control of the root order in the radical.
Heier's example is
$$
	\Re z_3 + |z_1^3 + z_1 z_2^K|^2 + |z_2|^2,
	\quad
	K\ge 2,
$$
where the set of pre-multipliers is
$S =\{  z_1^3 + z_1 z_2^K,  z_2 \}$ and a
calculation of Kohn's multiplier ideals
 yields, in the notation of Section~\ref{js}:
$$
	I_0 = \sqrt{J(S)} = J(S) = (3z_1^2 + z_2^K),
	\quad
	J(S\cup I_0) = (z_1, z_2^{K}),
	\quad
	I_1 = \sqrt{J(S\cup I_0)} = (z_1, z_2),
$$
where the last radical
requires taking elements of the root order $K$
that can be arbitrarily high
in comparison with the D'Angelo type $6$.
As consequence, the corresponding order of subellipticity 
$\eps$ in the subelliptic estimate \eqref{subel}
obtained this way is not effectively controlled by the type.
Note that since $z_2\in S$, $z_1\in J(S\cup I_0)$,
it is easy to regain the effectivity
by taking another Jacobian determinant
instead of the radical to 
obtain 
$$
	1\in J(S\cup J(S \cup I_0)),
$$
leading to an effective subelliptic estimate 
with $\eps = 1/8$.
In particular, the step of taking radicals
for this example can be avoided completely,
due to the presence of the linear pre-multiplier $z_2\in S$.

A similar lack of effectivity phenomenon is exhibited in
the Catlin-D'Angelo's example
as explained in \cite[Section 4]{CD10} and \cite[Section 4]{S17}
(where, however, it is not possible to avoid
taking radicals when there is no pre-multiplier of 
vanishing order $1$).
Furthermore, 
the same lack of control with the same argument also applies
to higher order perturbations of that example
(that may not be of a triangular form).

More precisely,
consider perturbations $\Omega\subset\C^3$
of the Catlin-D'Angelo's example
 given by
$$
    \Re z_3 + |F_1(z_1,z_2)|^2 + |F_2(z_1,z_2)|^2 < 0,
$$
$$
    F_1 = z_1^M + O(L),
    \quad
    F_2 = z_2^N + z_1^K z_2 + O(L),
    \quad
    K > M\ge 2,
    \quad
     N\ge 3,
$$
where we use the notation
$$
    O(L) = O(|(z_1,z_2)|^L).
$$
Then for $L$ sufficiently large, the explicit calculations in \cite[Proposition~4.4]{CD10} and \cite[Section 4.1]{S17}
show that the root order required for the radical $I_1$ of $J_1:= J(S\cup I_0)$ (in the notation of \S\ref{js})
is at least $K$, i.e.\ $(I_1)^{K-1}\not\subset J_1$.
As consequence, the order of subellipticity obtained this way is not effectively controlled by the type.

We now illustrate how our selection procedure modifying the original Kohn's algorithm applies to this case.
For simplicity, we shall assume 
$L$ sufficiently large but effectively bounded from below by the type 
$T=\max(M,N)$ (i.e.\ $L\ge \Phi(T)$ for suitable function $\Phi$
that can be directly computed),
and
$K$ being sufficiently large (when the above non-effectivity occurs).
The first multiplier generating $J_0:=J(S)$ is
\beq\Label{f1}
    f_1 :=
    \det
    \begin{pmatrix}
    \d F_1\cr
    \d F_2
    \end{pmatrix}
    =
    \det
    \begin{pmatrix}
    M z_1^{M-1} & 0\cr
    K z_1^{K-1} z_2 & N z_2^{N-1} + z_1^K 
    \end{pmatrix}
    =
    MN z_1^{M-1} z_2^{N-1} + M z_1^{K+M-1}
    + O(L-1),
\eeq
where the perturbation error estimate $L-1$ is only rough
for the sake of simplicity.
Then a monodromy argument implies that 
the zero set of $f_1$ is the union of the (possibly reducible) curves 
\beq\Label{curves}
    z_1^{M-1}=O(L-N), \quad
    N z_2^{N-1} + z_1^K = O(L-M).
\eeq

For any parametrisation $\g$ of an irreducible component of the first curve,
we take  in our Step 2
$$
    \phi = F_2, \quad \nu_\g(\phi) = N,
$$
and compute
$$
	f_2 =
	\det
	\begin{pmatrix}
		\d f_1\cr
	\d \phi
	\end{pmatrix}
    \sim
	   z_1^{M-2} z_2^{2(N-1)}
    + O(L-2)
    \mod z_1^{K-1},
$$
where we write $\sim$ for the equality up to a constant factor.
The corresponding jet vanishing orders of $f_1$ are
$$
    \nu^0_\g(f_1), \ldots, \nu^{M-2}_\g(f_1) \ge \frac{L-N}{M-1},
    \quad
    \nu^{M-1}_\g(f_1) = N-1,
    \quad
     \nu^{M}_\g(f_1) = N-2,
     \quad
   \ldots,
   \quad
   \nu^{M+N-2}_\g(f_1) = 0.
$$
Then our comparison condition \eqref{main2}
holds for $k=M-1$, and hence Lemma~\ref{main-tech}
gives the desired control of the next lower jet vanishing order
$$
    \nu^{M-2}_\g(f_2) = 2(N-1).
$$
Next, continuing as in Step 2 in the previous section,
we obtain the sequence of multipliers
$$
    f_j = \det
    	\begin{pmatrix}
		\d f_{j-1}\cr
	\d \phi
	\end{pmatrix}
    \sim
    z_1^{M-j} z_2^{j(N-1)}
    + O(L-j)
     \mod z_1^{K-j+1},
    \quad
    j=1,2,\ldots, M,
$$
where the last multiplier
$$
    f_M = z_2^{M(N-1)} + O(L-M) \mod z_1^{K-M+1}
$$
 has the finite order $M(N-1)$ along $\g$.

Similarly, for any curve $\g$ in the variety defined by the second equation \eqref{curves},
we take in our Step 1 the other pre-multiplier
$$
    \psi = F_1 = z_1^M + O(L),
    \quad
    \nu_\g(\psi) = M.
$$
Then following our Step 2 procedure, we obtain the sequence of multipliers
$$
    g_1 = f_1,
    \quad
    g_j = \det
    	\begin{pmatrix}
		\d g_{j-1}\cr
	\d \psi
	\end{pmatrix}
    \sim
    z_1^{j(M-1)} z_2^{N-j}
    + O(L-j),
    \quad
    j=2,\ldots, N,
$$
where the last multiplier
$$
    g_N = z_1^{N(M-1)}
    + O(L-N)
$$
 has the finite order $N(M-1)$ along $\g$.

Finally, in Step 3, we take
$$
    F=f_1 = MN z_1^{M-1} z_2^{N-1} + M z_1^{K+M-1}
    + O(L-1),
$$
$$
    \2F=f_M + g_N = z_2^{M(N-1)} +  z_1^{N(M-1)} + O(L-\max(M,N)) \mod z_1^{K-M+1}
$$
 such that
$$
    d= \nu(F)=M+N-2,
    \quad
    t=\nu_{F=0}(\2F) = \max(N(M-1), M(N-1))
$$
and the ideal generated by $F$ and $\2F$ has the multiplicity
$\le dt$
which bounds the root order in the radical,
to obtain the multipliers $z_1$ and $z_2$,
and hence their Jacobian determinant,
leading to the desired termination.
Furthermore, the number of steps and the root order when taking the radical,
and hence the order of subellipticity in Corollary~\ref{main-cor}
are explicitly controlled in terms of the type $2T=2\max(M,N)$.

\bigskip

{\bf Acknowledgements}.
We would like to thank Professor Joseph J. Kohn for 
suggesting the topic and encouragement,
and Professor John P. D'Angelo
for careful reading of the manuscript
and numerous helpful remarks as well as
many inspiring discussions.
We are also grateful to Professor Yum-Tong Siu for his interest
and encouragement
 and for kindly sending us his paper \cite{S17} containing
 the  most
recent account on the subject and setting future development 
perspectives.
We finally would like to thank the anonymous referee for careful reading
and numerous helpful suggestions.


\begin{thebibliography}{11111111}

%

\bibitem[Ba15]{Ba15}
Baracco, L.;
A multiplier condition for hypoellipticity of complex vector fields 
with optimal loss of derivatives. 
{\em J. Math. Anal. Appl.} {\bf 423} (2015), no. 1, 318--325. 



\bibitem[BaPZa15]{BaPZa15}
Baracco, L.; Pinton, S.; Zampieri, G.
Hypoellipticity of the Kohn-Laplacian $\square_b$ and of the 
$\bar\d$-Neumann problem by means of subelliptic multipliers.
{\em Math. Ann.} {\bf 362} (2015), no. 3-4, 887--901.

\bibitem[BNZ17]{BNZ17}
Basyrov,~A.;
Nicoara,~A.C.;
Zaitsev,~D.
	Sums of squares in pseudoconvex hypersurfaces
	and torsion phenomena for Catlin's boundary systems.
Preprint 2017.

\bibitem[BF81]{BF81}
Bedford, E.; Forn\ae ss, J.E.
Complex manifolds in pseudoconvex boundaries. 
{\em Duke Math. J.} {\bf 48} (1981), no. 1, 279--288. 

\bibitem[BSt92]{BS92}
 Boas, H.P.; Straube, E.J. 
On equality of line type and variety type of real hypersurfaces in $\C^n$. 
{\em J. Geom. Anal.} {\bf 2} (1992), no. 2, 95--98. 


\bibitem[BSt99]{BS99}
 Boas, H.P.; Straube, E.J. 
 Global regularity of the $\bar\d$-Neumann problem: a survey of the 
 $L^2$-Sobolev theory. Several complex variables (Berkeley, CA, 1995--1996), 79--111, Math. Sci. Res. Inst. Publ., 37, Cambridge Univ. Press, Cambridge, 1999.


\bibitem[C83]{C83}  D. Catlin. 
Necessary conditions for subellipticity of the $\bar\d$-Neumann problem. {\em Ann. of Math.} (2) {\bf 117} (1983), no. 1, 147--171.
%

\bibitem[C87]{C87} D. Catlin. 
Subelliptic estimates for the $\bar\d$-Neumann problem on pseudoconvex domains. {\em Ann. of Math. (2)}, 
{\bf 126} (1): 131--191, (1987).


\bibitem[CC08]{CC08}
Catlin, D.W.; Cho, J.S.
Sharp Estimates for the 
$\bar\d$-Neumann Problem on Regular Coordinate Domains.
Preprint 2008.
{\tt https://arxiv.org/abs/0811.0830} 

\bibitem[CD10]{CD10}
Catlin, D.W.; D'Angelo, J.P.
Subelliptic estimates. Complex analysis, 75--94,
Trends Math., Birkh\"auser/Springer Basel AG, Basel, 2010.


\bibitem[Ce08]{Ce08}
\c Celik, M.
Contributions to the compactness theory of the 
$\bar\d$-Neumann operator, Ph. D. dissertation,
Texas A\&M University, May 2008.

\bibitem[CeSt09]{CSt09}
\c Celik, M.; Straube, E.J.
Observations regarding compactness in the $\bar\d$-Neumann problem, 
{\em Complex Var. Elliptic Equ.} {\bf 54} (2009), no. 3-4, 173--186.

\bibitem[CeZ17]{CZ17}
\c Celik, M.;  Zeytuncu, E.Z.
Obstructions for Compactness of Hankel Operators: Compactness Multipliers. Preprint 2017.
To appear in the Illinois Journal of Mathematics.
{\sf https://arxiv.org/abs/1611.06377}



\bibitem[ChS11]{CS11}
Chakrabarti, D.; Shaw, M.-C.
The Cauchy-Riemann equations on product domains.
{\em Math. Ann.} {\bf 349} (2011), no. 4, 977--998. 



\bibitem[ChD13]{ChD13}
Chen, X.-X.; Donaldson, S.K.
Volume estimates for K\"ahler-Einstein metrics and rigidity of complex structures.
{\em J. Differential Geom.} {\bf 93} (2013), no. 2, 191--201. 

\bibitem[CS01]{ChS01}
 Chen, S.-C.; Shaw, M.-C. 
 Partial differential equations in several complex variables. 
 AMS/IP Studies in Advanced Mathematics, 19. American Mathematical Society, Providence, RI; International Press, Boston, MA, 2001.


\bibitem[Ch06]{Ch06}
Cho, J.-S.
An algebraic version of subelliptic multipliers.
Michigan Math. J. {\bf 54} (2006), no. 2, 411--426.

%

\bibitem[D79]{D79} 
D'Angelo,~J.P.
Finite type conditions for real hypersurfaces. 
{\em J. Differential Geom.} {\bf 14} (1979), no. 1, 59--66 (1980).


\bibitem[D82]{D82} 
D'Angelo,~J.P. 
Real hypersurfaces, orders of contact, and applications.
{\em Ann.~of Math.~(2)}, {\bf 115} (3), 615--637, 1982.


\bibitem [D93]{D93} D'Angelo, J.P.
Several complex variables and the geometry of real hypersurfaces.
Studies in Advanced Mathematics. CRC Press, Boca Raton, FL, 1993.

\bibitem [D95]{D95} 
D'Angelo, J.P.
Finite type conditions and subelliptic estimates. Modern methods in complex analysis (Princeton, NJ, 1992), 63--78, Ann. of Math. Stud., 137, Princeton Univ. Press, Princeton, NJ, 1995.

\bibitem [D17]{D17} 
D'Angelo, J.P.
A remark on finite type conditions.
{\em J. Geom. Anal.} (2018) 28: 2602. 
{\tt https://doi.org/10.1007/s12220-017-9921-1};
{\tt https://arxiv.org/abs/1708.07794}

\bibitem [DK99]{DK99}
D'Angelo, J.P.; Kohn, J.J.
Subelliptic estimates and finite type. Several complex variables (Berkeley, CA, 1995--1996), 199--232,
Math. Sci. Res. Inst. Publ., {\bf 37}, Cambridge Univ. Press, Cambridge, 1999.



\bibitem [De01]{De01}
Demailly, J.-P. 
Multiplier ideal sheaves and analytic methods in algebraic geometry. School on Vanishing Theorems and Effective Results in Algebraic Geometry (Trieste, 2000), 1--148, ICTP Lect. Notes, 6, 
Abdus Salam Int. Cent. Theoret. Phys., Trieste, 2001.

\bibitem [DiF78]{DF78}
Diederich, K.; Forn\ae ss, J.E. 
Pseudoconvex domains with real-analytic boundary. 
{\em Ann. Math. (2)} {\bf 107} (1978), no. 2, 371--384. 

\bibitem [DiF79]{DF79}
Diederich, K.; Forn\ae ss, J.E. 
Proper holomorphic maps onto pseudoconvex domains with real-analytic boundary. 
{\em Ann. of Math. (2)} {\bf 110} (1979), no. 3, 575--592. 



\bibitem [FK72]{FK72}
Folland, G.B.; Kohn, J.J.
The Neumann problem for the Cauchy-Riemann complex. 
Annals of Mathematics Studies, No. 75. Princeton University Press, Princeton, N.J.; University of Tokyo Press, Tokyo, 1972.

\bibitem [FLZ14]{FLZ14}
Forn\ae ss, J.E.; 
Lee, L.; 
Zhang, Y.
Formal complex curves in real smooth hypersurfaces.
{\em Illinois J. Math.} {\bf 58} (2014), no. 1, 1--10. 

\bibitem [FM94]{FM94}
Forn\ae ss, J.E.; 
McNeal, J.D.
A construction of peak functions on some finite type domains. 
{\em Amer. J. Math.} {\bf 116} (1994), no. 3, 737--755. 


\bibitem[FIK96]{FIK96} 
Fu, S.; Isaev, A.V.; Krantz, S.G.
Finite type conditions on Reinhardt domains. 
{\em Complex Variables Theory Appl.} {\bf 31} (1996), no. 4, 357--363. 


\bibitem [FS01]{FS01}
Fu, S.; Straube, E.J. 
Compactness in the $\bar\d$-Neumann problem. Complex analysis and geometry (Columbus, OH, 1999), 141Ð160, Ohio State Univ. Math. Res. Inst. Publ., 9, de Gruyter, Berlin, 2001.


\bibitem [Gr74]{G74}
Greiner, P.
Subelliptic estimates for the $\bar\d$-Neumann problem in $\C^2$. 
{\em J. Differential Geometry} {\bf 9} (1974), 239--250. 

\bibitem [Ha14]{Ha14}
Haslinger, F.
The $\bar\d$-Neumann problem and Schr\"odinger operators. 
De Gruyter Expositions in Mathematics, 59. De Gruyter, Berlin, 2014.

\bibitem [He08]{He08}
Heier, G. 
Finite type and the effective Nullstellensatz. {\em Comm. Algebra} {\bf 36} (2008), no. 8, 2947--2957.

\bibitem [Ho90]{Ho90}
 H\"ormander, L. 
 An introduction to complex analysis in several variables. Third edition. North-Holland Mathematical Library, 7. North-Holland Publishing Co., Amsterdam, 1990.


\bibitem [H14]{H14}
 Hwang, J.-M.
Mori geometry meets Cartan geometry: Varieties of minimal rational tangents.
To appear in Proceedings of ICM2014.
{\tt https://arxiv.org/abs/1501.04720}

\bibitem [HM98]{HM98}
 Hwang, J.-M.; Mok, N. 
 Rigidity of irreducible Hermitian symmetric spaces of the compact type under
K\"ahler deformation, 
 {\em Invent. math.} {\bf 131} (1998), 393--418.

 
%

\bibitem [KhZa14]{KhZa14}
Khanh, T.V.; Zampieri, G. 
Precise subelliptic estimates for a class of special domains. 
{\em J. Anal. Math.} {\bf 123} (2014), 171--181.




%


%


\bibitem[K72]{K72}
Kohn, J. J.
Boundary behavior of $\bar\d$ on weakly pseudo-convex manifolds of dimension two. Collection of articles dedicated to S. S. Chern and D. C. Spencer on their sixtieth birthdays. 
{\em J. Differential Geometry} {\bf 6} (1972), 523--542.


\bibitem[K79]{K79}
Kohn, J. J.
Subellipticity of the $\bar\d$-Neumann problem on pseudo-convex domains: sufficient conditions. {\em Acta Math.} {\bf 142} (1979), no. 1-2, 79--122.

\bibitem[K84]{K84}
Kohn, J. J.
A survey of the $\bar\d$-Neumann problem, pp. 137--145 in
Complex analysis of several variables (Madison, WI, 1982), 
edited by Y.-T. Siu,
Proc. Symp. Pure Math. 41, Amer. Math. Soc., Providence, RI, 1984.


\bibitem[K00]{K00}
Kohn, J.J.
Hypoellipticity at points of infinite type. 
Analysis, geometry, number theory: the mathematics of Leon Ehrenpreis (Philadelphia, PA, 1998), 393--398, 
Contemp. Math., 251, Amer. Math. Soc., Providence, RI, 2000. 

\bibitem[K02]{K02}
Kohn, J.J.
Superlogarithmic estimates on pseudoconvex domains and CR manifolds.
{\em Ann. of Math. (2)} {\bf 156} (2002), no. 1, 213--248. 

\bibitem[K04]{K04}
Kohn, J.J.
Ideals of multipliers. 
Complex analysis in several variables -- 
Memorial Conference of Kiyoshi Oka's Centennial Birthday, 147--157, 
Adv. Stud. Pure Math., 42, Math. Soc. Japan, Tokyo, 2004.

\bibitem[K05]{K05}
Kohn, J. J. 
Hypoellipticity and loss of derivatives. 
With an appendix by Makhlouf Derridj and David S. Tartakoff. 
{\em Ann. of Math. (2)} {\bf 162} (2005), no. 2, 943--986.

\bibitem[K10]{K10}
Kohn, J. J. 
Multipliers on pseudoconvex domains with real analytic boundaries. 
{\em Boll. Unione Mat. Ital. (9)} {\bf 3} (2010), no. 2, 309--324. 
ESI Preprint No. 2227.
{\tt http://www.esi.ac.at/static/esiprpr/esi2227.pdf}

\bibitem[K-etal04]{K-etal04}
Kohn, J. J.; Griffiths, P.A.; Goldschmidt, H.; 
Bombieri, E.; Cenkl, B.; 
Garabedian, P.; Nirenberg, L.
Donald C. Spencer (1912--2001). 
Notices Amer. Math. Soc. 51 (2004), no. 1, 17--29. 


\bibitem[KN65]{KN65}
Kohn, J. J.; Nirenberg, L.
Non-coercive boundary value problems.
{\em Comm. Pure Appl. Math.} {\bf 18} (1965), 443--492.


\bibitem[L04]{L04}
Lazarsfeld, R. 
Positivity in algebraic geometry. II. Positivity for vector bundles, and multiplier ideals. Ergebnisse der Mathematik und ihrer Grenzgebiete. 3. Folge. A Series of Modern Surveys in Mathematics [Results in Mathematics and Related Areas. 3rd Series. A Series of Modern Surveys in Mathematics], 49. Springer-Verlag, Berlin, 2004.

\bibitem[LT08]{LT08}
Lejeune-Jalabert, M.; Teissier, B.
Cl\^oture int\'egrale des id\'eaux et \'equisingularit\'e.
Annales de la facult\'e des sciences de Toulouse S\'er. 6, 
{\bf 17}, no. 4 (2008), p. 781--859.

\bibitem[M92a]{M92}
McNeal, J.D. 
Lower bounds on the Bergman metric near a point of finite type. 
{\em Ann. of Math. (2)} {\bf 136} (1992), no. 2, 339--360.

\bibitem[M92b]{M92b}
McNeal, J.D. 
Convex domains of finite type. 
{\em  J. Funct. Anal.} {\bf 108} (1992), no. 2, 361--373.
 

\bibitem[M03]{M03}
McNeal, J.D. 
Subelliptic estimates and scaling in the $\bar\d$-Neumann problem. Explorations in complex and Riemannian geometry, 197--217, 
Contemp. Math., 332, Amer. Math. Soc., Providence, RI, 2003. 

\bibitem[MN05]{MN05}
McNeal, J.D., N\'emethi, A.  
The order of contact of a holomorphic ideal in $\C^2$. {\em Math. Z.} {\bf 250} (4), 873--883.

\bibitem[MV15]{MV15}
McNeal, J.D., Varolin, D.
$L^2$ estimates for the $\bar\d$-operator. 
{\em Bull. Math. Sci.} {\bf 5} (2015), no. 2, 179--249.

\bibitem[MM17]{MM17}
McNeal, J.D., Mernik, L.
Regular versus singular order of contact on pseudoconvex hypersurfaces.
 J Geom Anal (2018) 28: 2653. 
 {\tt https://doi.org/10.1007/s12220-017-9926-9}
 
\bibitem[Mo08]{Mo08}
Mok, N.
Geometric structures on uniruled projective manifolds defined by their varieties of minimal rational tangents.
G\'eom\'etrie diff\'erentielle, physique math\'ematique, math\'ematiques
              et soci\'et\'e. II
             {\em Ast\'erisque} No. {\bf 322} (2008), 151--205.

\bibitem[Mr66]{Mr66} 
Morrey, C.~B.,~Jr. 
Multiple integrals in the calculus of variations. 
Reprint of the 1966 edition. 
Classics in Mathematics. Springer-Verlag, Berlin, 2008.

\bibitem[Na90]{N90}
Nadel, A.M.
Multiplier ideal sheaves and K\"ahler-Einstein metrics of positive scalar curvature.
{\em Ann. of Math.} (2) 132 (1990), no. 3, 549--596.

\bibitem[N12]{N12}
Nicoara, A.C.
Effective vanishing order of the Levi determinant
{\em Math. Ann.} {\bf 354} (2012), 1223--1245.

\bibitem[N14]{N14}
Nicoara, A.C.
Direct Proof of Termination of the Kohn Algorithm in the Real-Analytic Case.
Preprint 2014. 
{\tt https://arxiv.org/abs/1409.0963}
 
\bibitem[O02]{O02}
 Ohsawa, T.
 Analysis of several complex variables. Translated from the Japanese by Shu Gilbert Nakamura. Translations of Mathematical Monographs, 211. Iwanami Series in Modern Mathematics. American Mathematical Society, Providence, RI, 2002.


\bibitem[O15]{O15}
 Ohsawa, T.
 $L^2$ approaches in several complex variables. 
 Development of Oka-Cartan theory by 
 $L^2$ estimates for the $\bar\d$-operator. 
 Springer Monographs in Mathematics. Springer, Tokyo, 2015.

%


\bibitem[Si91]{Si91}
Sibony, N.
Some recent results on weakly pseudoconvex domains. 
Proceedings of the International Congress of Mathematicians, Vol. I, II (Kyoto, 1990), 943--950, Math. Soc. Japan, Tokyo, 1991. 


\bibitem[S01]{S01}
Siu, Y.-T.
Very ampleness part of Fujita's conjecture and multiplier ideal sheaves of Kohn and Nadel. Complex analysis and geometry (Columbus, OH, 1999), 171--191, Ohio State Univ. Math. Res. Inst. Publ., 9, de Gruyter, Berlin, 2001.

\bibitem[S02]{S02}
Siu, Y.-T.
Some recent transcendental techniques in algebraic and complex geometry. Proceedings of the International Congress of Mathematicians, Vol. I (Beijing, 2002), 439--448, Higher Ed. Press, Beijing, 2002. 

\bibitem[S05]{S05}
Siu, Y.-T.
Multiplier ideal sheaves in complex and algebraic geometry. Sci. China Ser. A {\bf 48} (2005), suppl., 1--31.

\bibitem[S09]{S09}
Siu, Y.-T. Dynamic multiplier ideal sheaves and the construction of rational curves in Fano manifolds. Complex analysis and digital geometry, 323--360, Acta Univ. Upsaliensis Skr. Uppsala Univ. C Organ. Hist., 86, Uppsala Universitet, Uppsala, 2009.

\bibitem[S10]{S10}
Siu, Y.-T. Effective termination of Kohn's algorithm for subelliptic multipliers.
Pure Appl. Math. Q. {\bf 6} (2010), no. 4, Special Issue: In honor of Joseph J. Kohn. Part 2, 1169--1241.

\bibitem[S17]{S17}
 Siu, Y.-T. New procedure to generate multipliers in complex Neumann problem and effective Kohn algorithm. {\em Sci. China Math.} {\bf 60} (2017), no. 6, 1101--1128.

\bibitem[St06]{St06}
Straube, E.J. 
Aspects of the $L^2$-Sobolev theory of the 
$\bar\d$-Neumann problem. International Congress of Mathematicians. Vol. II, 1453--1478, Eur. Math. Soc., ZŸrich, 2006.

\bibitem[St08]{St08}
Straube, E.J.
A sufficient condition for global regularity of the 
$\bar\d$-Neumann operator. {\em Adv. Math.} {\bf 217} (2008), no. 3, 1072--1095. 
ESI Preprint No. 
{\tt 1718: http://www.esi.ac.at/static/esiprpr/esi1718.pdf}

\bibitem[St10]{St10}
Straube, E.J. 
Lectures on the $L^2$-Sobolev theory of the $\bar\d$-Neumann problem. 
ESI Lectures in Mathematics and Physics. European Mathematical Society (EMS), ZŸrich, 2010.

\bibitem[Sw72]{Sw72}
Sweeney,~W.J. 
Coerciveness in the Neumann problem, 
{\em J. Differential Geometry} {\bf 6} (1972),
375--393.

\bibitem[Ta11]{Ta11}
Tartakoff, D.S.
Nonelliptic partial differential equations. 
Analytic hypoellipticity and the courage to localize high powers of T. Developments in Mathematics, 22. Springer, New York, 2011.

\bibitem[Tr80]{Tr80}
Tr\`eves, F.
Introduction to pseudodifferential and Fourier integral operators. Vol. 1. 
Pseudodifferential operators. The University Series in Mathematics. Plenum Press, New York-London, 1980.

\bibitem[Z17]{Z17}
Zaitsev, D.
A geometric approach to Catlin's boundary systems.
To appear in Annales de l'Institut Fourier.
{\tt https://arxiv.org/abs/1704.01808}


\bibitem[Za08]{Z08}
Zampieri, G.
Complex analysis and CR geometry. 
University Lecture Series, 43. American Mathematical Society, Providence, RI, 2008.

\end{thebibliography}
\end{document}